\documentclass[11pt]{article}

\usepackage[margin=1in]{geometry}
\usepackage[T1]{fontenc}
\usepackage[utf8]{inputenc}
\usepackage{lmodern}
\usepackage{microtype}
\usepackage{amsmath,amssymb,amsthm,mathtools}
\usepackage{enumitem}
\usepackage[colorlinks=true, linkcolor=blue, citecolor=blue, urlcolor=blue]{hyperref}

\newtheorem{theorem}{Theorem}[section]
\newtheorem{lemma}[theorem]{Lemma}

\newtheorem{conjecture}[theorem]{Conjecture}
\newtheorem{problem}[theorem]{Problem}
\theoremstyle{remark}
\newtheorem{remark}[theorem]{Remark}

\newcommand{\fo}{f_o}
\newcommand{\girth}{\operatorname{girth}}

\title{On Scott's odd induced subgraph conjecture and a related problem}
\author{Bo Ning}
\date{April 22, 2026}

\begin{document}
\maketitle

\begin{abstract}
For a graph $G$, let $\fo(G)$ denote the maximum order of an induced subgraph of $G$ all of whose vertices have odd degree, and let
$\chi(G)$ denote the chromatic number of $G$. Scott (CPC, 1992) proved that $\fo(G) \ge |V(G)|/(2\chi(G))$ for every graph without isolated vertices, and conjectured that the factor $2$ can be removed. Wang and Wu (JGT, 2024) showed that this conjecture fails for bipartite graphs, but holds for line graphs. In this article, we confirm Scott's conjecture for claw-free graphs without isolated vertices, thereby strengthening the result of Wang and Wu. We also construct $K_{1,r}$-free graphs of arbitrarily large order to show that the conjecture fails for this broader class, for every integer $r \ge 4$.

Wang and Wu also asked whether $\fo(L(G)) \ge n/2$ holds for every connected regular graph $G$ of order $n \ge 3$. We show that $C_5$ is the smallest counterexample to this problem. On the positive side, we prove that if $G$ is a connected $k$-regular $C_5$-free graph on $n$ vertices with $k \ge 2$, then $\fo(L(G)) \ge n/2$.
\end{abstract}

\section{Introduction}\label{Section:Introduction}

Throughout this article, all graphs are finite and simple. An induced subgraph $H$ of a graph $G$ is called \emph{odd} (\emph{even}) if every vertex of $H$ has odd (even) degree in $H$. For a graph $G$, let $\fo(G)$ denote the maximum order of an odd induced subgraph of $G$.

The study of parity-constrained induced subgraphs goes back to a classical theorem of Gallai (see \cite[Problem~5.17]{Lovasz}), which asserts that every graph $G$ admits a partition $V(G)=V_1\cup V_2$ such that both $G[V_1]$ and $G[V_2]$ have all degrees even, and also a partition $V(G)=V_o\cup V_e$ such that $G[V_o]$ is odd and $G[V_e]$ is even. In particular, every graph on $n$ vertices contains an induced subgraph of order at least $n/2$ with all degrees even. This naturally leads to the odd analogue: how large an odd induced subgraph must a graph without isolated vertices contain?

To discuss the extremal problem, let
\[
\fo(n):=\min\{\fo(G): |V(G)|=n,\ \delta(G)\ge 1\},
\]
where $\delta(G)$ is the minimum degree of a graph
$G$. A long-standing conjecture, aptly described by Caro \cite{Caro} as ``part of the graph theory folklore,'' asserts that there exists a constant $c>0$ such that $\fo(n)\ge cn$ for every positive integer $n$. In the same paper, solving a weaker problem of Alon, Caro  \cite{Caro} proved that $\fo(n)=\Omega(\sqrt n)$. Scott \cite{Scott1992} later improved this to $\fo(n)=\Omega(n/\log n)$. A major breakthrough was obtained by Ferber and Krivelevich \cite{FerberKrivelevich}, who confirmed the folklore conjecture, by proving that one may take $c=10^{-4}$.

Once a linear bound is known in general, it becomes natural to seek sharper estimates for special graph classes. Indeed, before the work \cite{FerberKrivelevich}, a considerable body of work had already examined the problem restricted to special graph families. For trees, Caro, Krasikov, and Roditty \cite{CaroKrasikovRoditty} obtained an initial linear bound of $\fo(T)\geq \lceil|V(T)|/2\rceil$, and Radcliffe and Scott \cite{RadcliffeScott} later proved the sharp estimate $\fo(T)\ge 2\lfloor (|V(T)|+1)/3\rfloor$. Berman, Wang and Wargo \cite{BermanWangWargo} established the best possible bound $\fo(G)\ge 2|V(G)|/5$ for graphs of maximum degree at most $3$, and Hou, Yu, Li, and Liu \cite{HouYuLiLiu} proved the same sharp bound for graphs of treewidth at most $2$. For planar graphs, Rao, Hou, and Zeng \cite{RaoHouZeng} showed that $\fo(G)\ge |V(G)|/3$ for graphs of girth at least $6$, and $\fo(G)\ge 2|V(G)|/5$ for graphs of girth at least $7$, with both bounds best possible. Very recently, Ai, Guo, Gutin, Hao, and Yeo  \cite{AiEtAl} proved that $\fo(G)\ge 2|V(G)|/7$ for every graph of maximum degree at most $4$ and without isolated vertices, and that this bound is tight. These results show that the extremal behavior of $\fo(G)$ remains quite delicate even for highly structured families
of graphs.

In this paper, however, we are mainly concerned with a conjecture due to Scott \cite{Scott1992}, which arises
from a chromatic point of view. He proved that every graph $G$ without isolated vertices satisfies
$\fo(G)\ge \frac{|V(G)|}{2\chi(G)},$
and conjectured that the factor $2$ can be removed.

\begin{conjecture}[Scott's conjecture \cite{Scott1992}]
If $G$ has no isolated vertices then
\[
\fo(G)\ge \frac{|V(G)|}{\chi(G)}.
\]
\end{conjecture}

Scott's conjecture is substantially stronger than the folklore linear-bound problem on classes of bounded chromatic number. The first major progress on this conjecture was obtained by Wang and Wu \cite{WangWu}, who showed that it fails for bipartite graphs, but holds for all line graphs. Thus, the conjecture already fails in the class $\chi=2$. However, it remains open for graphs with chromatic number at least $3$. Wang and Wu  \cite{WangWu} also raised a related problem on line graphs of regular graphs, asking whether $\fo(L(G))\ge n/2$ for every connected regular graph $G$ of order $n\ge 3$.

In this article, we prove Scott's conjecture for claw-free graphs.
It is known that every line graph is claw-free \cite{Beineke}. Thus,
our result strengthens the line-graph theorem of Wang and Wu.
We point out that this is best possible with respect to forbidding stars: for every $r\ge 4$, there exist arbitrarily large $K_{1,r}$-free graphs that violate Scott's conjecture (Theorem~\ref{thm:k1rfree-counterexample}). Therefore, $K_{1,3}$-freeness is the largest induced-star-forbidding condition under which the conjecture holds in general. In fact, claw-free graphs for the more general problem had already been studied before. We define $f_k(G)$ to be
the maximum order of an induced subgraph of $G$ with all degree congruent to 1 modulo $k$, and
define $f_k(n):=\min \{f_k(G):\delta(G)\geq 1, v(G)=n\}$.
Caro \cite{Caro} proved that if $G$ is claw-free connected graph on $n$ vertices, then $f_k(G)\geq c_k(n\log n)^{1/3}$.
Scott \cite{Scott2001} improved this result to that $f_k(G)\geq (1+o(1))\sqrt{n/12}.$

Wang and Wu \cite{WangWu} also asked the following problem: Is it true that $\fo(L(G))\ge n/2$ for every connected regular graph $G$ of order $n\ge 3$?
We address this problem of Wang and Wu.
We show that $C_5$ is the smallest counterexample. We also give a short proof of
a positive result: Every connected
$k$-regular $C_5$-free graph $G$ of order $n$, with $k\ge 2$, satisfies $\fo(L(G))\ge n/2$ (Theorem~\ref{thm:regular-c5free}).
Our proof mainly uses two classical factor theorems of Petersen and of Kano, respectively.

We introduce some notations. For a graph $G$, let $\chi(G)$ denote its chromatic number; $G$ is $k$-chromatic if $\chi(G)=k$. A $2$-factor of $G$ is a collection of vertex-disjoint cycles $C_1,\dots,C_p$ such that $\bigcup_{i=1}^p V(C_i)=V(G)$. For positive integers $a,b$, an $[a,b]$-factor of $G$ is a spanning subgraph $H$ satisfying $a\le \deg_H(v)\le b$ for every vertex $v$. A cycle is even if its length is even. In this paper, $G$ is called $C_5$-free if it contains no subgraph isomorphic to $C_5$, and $K_{1,r}$-free (for $r\ge 3$) if it contains no induced subgraph isomorphic to $K_{1,r}$. We use $L(G)$ for the line graph of $G$, and for $S\subseteq V(G)$, $G[S]$ denotes the subgraph induced by $S$.

The organization of this paper is as follows. In Section \ref{Section:Introduction}, we give an introduction to
the topic. In Section \ref{Section:Clawfree}, we confirm Scott's conjecture for claw-free graphs
and show that Scott's conjecture fails for $K_{1,r}$-free graphs of arbitrarily large order, for
each integer $r\geq 4$. In Section \ref{Section:Problem-WangWu}, we point out that $C_5$ is the
smallest counterexample to an open problem of Wang and Wu, and present a positive result.
Finally, we conclude with two open problems in the last section.

\section{The claw-free case}\label{Section:Clawfree}
The purpose of this section is to resolve Scott's conjecture for claw-free graphs.

Recall that a graph is claw-free if it contains no induced copy of $K_{1,3}$. Claw-free graphs
form an important family in structural graph theory. In \cite{WangWu}, Wang and Wu
confirmed Scott's conjecture for line graphs. Unlike the proof in \cite{WangWu}, which relies mainly on case analysis, our argument is based on the
following consequence of Scott's weighted theorem (see Corollary~4 of \cite{Scott1992}). This idea
is also mentioned in \cite{Scott2001} for the generally conjecture (see the last section of \cite{Scott2001}).

\begin{lemma}[Scott \cite{Scott1992}]\label{lem:Scott-weighted}
Let $G$ be a $k$-chromatic graph without isolated vertices, and let $2\le m\le k$. Then $G$ contains an induced $m$-chromatic subgraph $H$ without isolated vertices
such that
\[
|V(H)|\ge \frac{m}{k}|V(G)|.
\]
\end{lemma}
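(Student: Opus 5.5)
The plan is to reduce to the case $m=k-1$ and iterate. Suppose we can show that every $k$-chromatic graph $G$ without isolated vertices, with $k\ge 3$, contains an induced $(k-1)$-chromatic subgraph $G'$ without isolated vertices and with $|V(G')|\ge \frac{k-1}{k}|V(G)|$. Then we apply this to $G'$, and so on. After $k-m$ steps we reach an induced $m$-chromatic subgraph $H$ without isolated vertices, and the ratios telescope:
\[
|V(H)|\ge \tfrac{k-1}{k}\cdot\tfrac{k-2}{k-1}\cdots\tfrac{m}{m+1}\,|V(G)|=\tfrac{m}{k}|V(G)|.
\]
For $m=k$ there is nothing to prove (take $H=G$). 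So everything rests on the one-step statement.

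For the one-step statement, fix a proper $k$-colouring $V_1,\dots,V_k$ of $G$. For each $i$ let $I_i$ be the set of vertices $x\notin V_i$ with $\emptyset\ne N(x)\subseteq V_i$. These are exactly the vertices that become isolated when $V_i$ is deleted. Since $\delta(G)\ge1$, the sets $I_i$ are pairwise disjoint, and each $I_i$ is independent. Put $H_i:=G-(V_i\cup I_i)$. Then:
\begin{itemize}
\item $H_i$ has no isolated vertices.
\item $\chi(H_i)\le k-1$, because the $k-1$ classes other than $V_i$ colour it.
\item $\chi(H_i)\ge k-1$, because $\chi(G-V_i)\ge k-1$ and the vertices of $I_i$ are isolated in $G-V_i$; deleting isolated vertices does not lower the chromatic number once it is at least $2$, and here $k-1\ge2$.
\end{itemize}
So each $H_i$ is a valid candidate. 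By averaging, some $H_i$ is large enough provided
\[
\sum_i\bigl(|V_i|+|I_i|\bigr)\le |V(G)|,
\]
that is, provided the colouring can be chosen with all $I_i=\emptyset$. (Alternatively, the $I_i$ could be absorbed into the classes so that they are not counted twice.)

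The main obstacle is choosing the colouring well. My first attempt is a Grundy-type colouring, obtained by repeatedly moving a vertex of $V_j$ into a lower class $V_{j'}$ in which it has no neighbour. This process terminates and yields a colouring in which every vertex of $V_j$ has a neighbour in each $V_{j'}$ with $j'<j$. In such a colouring a vertex $x\in I_i\cap V_j$ with $j\ge 2$ forces $j=2$ and $i=1$, and otherwise $x\in V_1$. So the bad vertices are very restricted: they all lie in $V_1\cup V_2$, and their neighbourhoods sit inside a single class.

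I would then try a second extremal choice, among Grundy colourings, that minimises $\sum_i|I_i|$ (or maximises $|V_1|$). The aim is to recolour a bad vertex $x\in I_i$ into a class containing none of its neighbours, or to swap $x$ with its neighbours along a two-coloured component, without creating new bad vertices. If such a clean exchange argument fails, the fallback is Scott's weighted averaging. One puts weights on the vertices, with each vertex of $I_i$ charged to its colour class, and averages over the $k$ choices of the deleted class. The aim is to show that the total weight lost is at most $|V(G)|$; this is exactly the inequality needed above.
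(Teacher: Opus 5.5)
Your telescoping reduction to the case $m=k-1$ is sound, as are your three observations about $H_i=G-(V_i\cup I_i)$. The gap is the averaging step, which is the entire content of the lemma. You never carry it out, and the route you propose for it cannot work.

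Because the $I_i$ are disjoint from one another and from $V_i$, you get $\sum_i|V(H_i)|=(k-1)n-\sum_i|I_i|$, so uniform averaging needs every $I_i$ to be empty. That is impossible in general. If $x$ has a single neighbour $y$, then $x\in I_{c(y)}$ under \emph{every} proper colouring $c$. The same happens beyond degree $1$: a vertex adjacent exactly to the two non-adjacent vertices of a diamond $K_4-e$ in a $3$-chromatic graph lies in some $I_i$ under every $3$-colouring, since those two vertices always receive the same colour. Membership of $x$ in some $I_i$ is determined by the colours on $N(x)$ alone. So recolouring $x$ (your exchange step) never helps, and Grundy or extremal colourings only restrict where such vertices sit. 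The ``fallback'' to Scott's weighted averaging is left unspecified, yet it is precisely what has to be proved. The paper itself does not prove the lemma; it imports it as Corollary~4 of Scott's weighted theorem.

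The missing idea is to stop charging a bad vertex to its own colour class. Your parenthetical about absorbing the $I_i$ points this way but does not realize it. Let $B$ be the set of vertices $x$ whose neighbourhood lies in a single class $V_{\phi(x)}$. Put $J_j=\{y: N(y)\subseteq V_j\setminus B\}$ and $D_j=(V_j\setminus B)\cup J_j$. That is, delete $V_j$ except its bad vertices, together with the vertices this isolates.

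Each $x\in V_j\cap B$ sees only the colour $\phi(x)\ne j$. Since $k\ge 3$, it can be recoloured with a colour in $\{1,\dots,k\}\setminus\{j,\phi(x)\}$. These vertices are pairwise non-adjacent and none of their neighbours is recoloured, so $G-D_j$ receives a proper $(k-1)$-colouring. One then checks three things:
\begin{itemize}
\item $G-D_j$ has no isolated vertices. A neighbour $u$ of $x\in V_j\cap B$ is not in $J_j$, because $x\in N(u)\cap B$; and $J_j$ has no neighbours outside $V_j$.
\item $\chi(G-D_j)\ge k-1$, because $G-D_j$ contains $G-V_j-J_j$ and $J_j$ is isolated in $G-V_j$.
\item $J_j\subseteq\phi^{-1}(j)$, so the sets $J_j$ are disjoint subsets of $B$.
\end{itemize}
Hence $\sum_j|D_j|\le|V(G)\setminus B|+|B|=n$, so some $j$ has $|V(G-D_j)|\ge\frac{k-1}{k}n$. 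Your telescoping then finishes the proof, and the same computation works verbatim with non-negative vertex weights.
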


\begin{theorem}\label{thm:clawfree}
If $G$ is claw-free and has no isolated vertices then
\[
\fo(G)\ge \frac{|V(G)|}{\chi(G)}.
\]
\end{theorem}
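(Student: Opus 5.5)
Apply Lemma~\ref{lem:Scott-weighted} with $m=2$. Write $k=\chi(G)$. If $k=1$ then $G$ has no edges, which is impossible since $G$ has no isolated vertices; so $k\ge 2$. The lemma then gives an induced bipartite subgraph $H$ of $G$ with $\delta(H)\ge 1$ and $|V(H)|\ge 2|V(G)|/k$.

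Since $H$ is an induced subgraph of a claw-free graph, $H$ is claw-free. A bipartite graph contains no triangles, so any vertex with three neighbours would be the centre of an induced claw. Hence $\Delta(H)\le 2$. Every component of $H$ is therefore a path on at least two vertices or an even cycle; isolated vertices are excluded because $\delta(H)\ge 1$.

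It then suffices to show that each such component $C$ contains an odd induced subgraph on at least $|V(C)|/2$ vertices. Because the components of $H$ are pairwise nonadjacent in $H$, and $H$ is induced in $G$, the union of these odd subgraphs is an odd induced subgraph of $G$ of order at least $|V(H)|/2\ge |V(G)|/k$.

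\textbf{Paths.} For a path $P_t$ with vertices $v_1,\dots,v_t$, I take the edges $v_1v_2, v_4v_5, v_7v_8,\dots$, that is, pairs separated by one skipped vertex. This induces a perfect matching, so every vertex has degree $1$. A direct count gives $2\lfloor (t+1)/3\rfloor$ vertices, and this is at least $t/2$ for every $t\ge 2$:
\begin{itemize}
\item $t=2$ gives $2$;
\item $t=3$ gives $2$;
\item $t=4$ gives $2$;
\item $t=5$ gives $4$;
\item in general $2\lfloor (t+1)/3\rfloor\ge 2(t-1)/3\ge t/2$ once $t\ge 4$.
\end{itemize}

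\textbf{Even cycles.} For an even cycle $C_{2s}$ with $s\ge 2$, I delete one vertex to obtain the path $P_{2s-1}$ and apply the path bound. This gives $2\lfloor 2s/3\rfloor$ vertices, which must be at least $s$. Small cases:
\begin{itemize}
\item $s=2$: $2\ge 2$;
\item $s=3$: $4\ge 3$;
\item general $s$: $2\lfloor 2s/3\rfloor\ge 2(2s-2)/3\ge s$ when $s\ge 4$.
\end{itemize}
The deletion leaves the remaining vertices inducing a path, and the chosen matching edges are pairwise nonadjacent in the cycle as well. The only possible extra adjacency is the wrap-around edge between $v_1$ and $v_{2s-1}$ through the deleted vertex; since that vertex is deleted, no such edge survives.

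The main point needing care is the claw-free plus bipartite step forcing maximum degree $2$; everything else is routine. One should also confirm that Lemma~\ref{lem:Scott-weighted} applies when $k=2$, where it takes $H=G$.
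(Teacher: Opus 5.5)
Your proof is correct and follows essentially the same route as the paper. Both apply Scott's lemma with $m=2$, use claw-freeness plus bipartiteness to force $\Delta(H)\le 2$, and then prove a half-bound on path and even-cycle components. The only differences are cosmetic: you give an explicit every-third-edge matching for paths and handle even cycles by deleting a vertex, whereas the paper uses the recursion $p_t\ge 2+p_{t-3}$ for paths and its separate cycle lemma for cycles.
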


Before proving Theorem \ref{thm:clawfree}, we establish a useful lemma.
\begin{lemma}\label{lem:cycle}
For every integer $\ell\ge 3$,
\[
\fo(C_\ell)=2\left\lfloor \frac{\ell}{3}\right\rfloor.
\]
In particular, if $\ell\ne 5$, then
\[
\fo(C_\ell)\ge \frac{\ell}{2}.
\]
\end{lemma}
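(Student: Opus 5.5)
We prove the formula by matching a structural description of odd induced subgraphs of $C_\ell$ with an explicit construction. Every vertex of $C_\ell$ has degree $2$, so an induced subgraph has maximum degree at most $2$. It is odd only if every vertex has degree exactly $1$, that is, only if it is a perfect matching consisting of induced edges. The case $S=V(C_\ell)$ is excluded, since there every degree is $2$.

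So let $S\subsetneq V(C_\ell)$ with $C_\ell[S]$ odd. The vertices of $S$ split into maximal runs of consecutive vertices along the cycle. Each run is a path, and this path is odd exactly when it has $2$ vertices. Hence $S$ is a union of $t$ consecutive pairs, and distinct pairs are separated by gaps of at least one vertex outside $S$. Going around the cycle there are $t$ gaps, so
\[
\ell \ge 2t+t=3t, \qquad\text{hence } |S|=2t\le 2\lfloor \ell/3\rfloor .
\]

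Conversely, let $t=\lfloor \ell/3\rfloor\ge 1$. Label the cycle $v_0,\dots,v_{\ell-1}$ and take the pairs $\{v_{3i},v_{3i+1}\}$ for $0\le i<t$. Consecutive pairs are separated by one vertex each. Between the last pair and $v_0$ lie the vertices $v_{3t-1},\dots,v_{\ell-1}$, and there are $\ell-3t+1\ge 1$ of them. So the wrap-around gap is also nonempty, and the chosen set induces a matching with $2t$ vertices. This gives $\fo(C_\ell)=2\lfloor \ell/3\rfloor$.

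For the inequality, we need $2\lfloor \ell/3\rfloor\ge \ell/2$. Write $\ell=3t+s$ with $s\in\{0,1,2\}$. The inequality then reads $4t\ge 3t+s$, i.e.\ $t\ge s$.
\begin{itemize}
\item If $s=0$, this is automatic.
\item If $s=1$, it needs $t\ge1$, which holds since $\ell\ge 4$.
\item If $s=2$, it needs $t\ge 2$, which fails only for $t=1$, that is, $\ell=5$.
\end{itemize}
Indeed $\fo(C_5)=2<5/2$, which is why $\ell=5$ is excluded.

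The only point needing care is the upper bound: we must exclude the whole vertex set and correctly count the $t$ gaps around the cycle. Once that is done, the rest is direct.
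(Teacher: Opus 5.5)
Your proof is correct and follows essentially the same route as the paper: odd induced subgraphs of $C_\ell$ are exactly induced matchings, the upper bound $3t\le \ell$ comes from the forced separation between matching edges, the lower bound takes every third edge, and the final inequality is the same residue check modulo $3$. The one difference is in the upper bound: you count vertex gaps, while the paper counts the two forbidden neighbouring edges per matching edge, and your version makes explicit the non-overlap that the paper's one-line argument leaves implicit.
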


\begin{proof}
Let $S\subseteq V(C_\ell)$ and suppose that $C_\ell[S]$ is an odd induced subgraph. Since every vertex of a cycle has degree at most $2$, each vertex of $C_\ell[S]$ must have degree $1$. Therefore every component of $C_\ell[S]$ is an edge, and the chosen edges form an induced matching of $C_\ell$. Conversely, every induced matching in $C_\ell$ gives an odd induced subgraph.

Hence $\fo(C_\ell)$ is twice the maximum size of an induced matching in $C_\ell$. If $M$ is an induced matching in $C_\ell$, then each edge of $M$ forbids its two neighboring edges on the cycle, so $|M|\le \lfloor \ell/3\rfloor$. On the other hand, choosing every third edge on the cycle gives an induced matching of size $\lfloor \ell/3\rfloor$. Thus, we have $\fo(C_\ell)=2|M|=2\left\lfloor \frac{\ell}{3}\right\rfloor.$
The final assertion follows immediately, since for $\ell\ne 5$ one checks that $2\left\lfloor \frac{\ell}{3}\right\rfloor\ge \frac{\ell}{2}.$
\end{proof}

Now we are ready to prove Theorem \ref{thm:clawfree}.
\begin{proof}[Proof of Theorem \ref{thm:clawfree}]
Let $k=\chi(G)$ and $n=|V(G)|$. By Lemma~\ref{lem:Scott-weighted} with $m=2$, there exists an
induced bipartite subgraph $H$ of $G$ such that $H$ has no isolated vertices and $|V(H)|\ge \frac{2n}{k}.$
Because $H$ is an induced subgraph of the claw-free graph $G$, the graph $H$ is also claw-free.

We claim that $\Delta(H)\le 2$. Indeed, since $H$ is bipartite, the neighbors of any vertex lie in the
same part of the bipartition and are pairwise nonadjacent.
Therefore, a vertex of degree at least $3$ would be the center of an induced claw in $H$, contradicting claw-freeness.
Hence, every connected component of $H$ is either a path or a cycle. Since $H$ is bipartite, every cycle component is even.

It therefore suffices to prove the half-bound for paths and even cycles.

Let $p_t:=\fo(P_t)$, where $P_t$ is the path on $t$ vertices. Clearly,
$p_2=2$, $p_3=2$, and $p_4=2$.
Now let $t\ge 5$, and label the path $v_1v_2\dots v_t$. Take the edge $v_1v_2$, skip $v_3$, and in the remaining induced path $P_t[\{v_4,\dots,v_t\}]\cong P_{t-3}$ take a maximum odd induced subgraph. The union is again an odd induced subgraph of $P_t$, so
$p_t\ge 2+p_{t-3}.$ Indeed, using the induction hypothesis yields $p_t\geq 2+(t-3)/2=(t+1)/2\geq t/2$
for all $t\ge 2$.
Next consider an even cycle $C_{2r}$. If $r=2$, then $\fo(C_4)=2=|V(C_4)|/2$. Suppose now that $r\ge 3$.
By Lemma \ref{lem:cycle},
$\fo(C_{2r})\ge \frac{|V(C_{2r})|}{2}.$

Thus, every connected component of $H$ contains an odd induced subgraph on at least half of its vertices. Summing over the components, we obtain
$\fo(H)\ge \frac{|V(H)|}{2}.$
Since every induced subgraph of $H$ is also an induced subgraph of $G$, we conclude that
$\fo(G)\ge \fo(H)\ge \frac{|V(H)|}{2}\ge \frac{1}{2}\cdot \frac{2n}{k}=\frac{n}{k}.$
This proves the theorem.
\end{proof}

One may ask whether Scott's conjecture holds for all $K_{1,r}$-free graphs with $r\ge 3$.
We point out that the answer is affirmative only when $r=3$. Let $F$ be the bipartite graph with bipartition
\[
X=\{a,b,c,d\},\qquad Y=\{u,v,w,x,y\},
\]
and neighborhoods
\[
N(a)=\{u,x,y\},\qquad N(b)=\{v,w,x\},
\]
\[
N(c)=\{u,v,y\},\qquad N(d)=\{u,w,x\}.
\]
Equivalently, the edge set is
$E(F)=\{au,ax,ay,bv,bw,bx,cu,cv,cy,du,dw,dx\}.$

One can see our graph $F$ is a $K_{1,4}$-free counterexample with $\triangle(F)=3$.

\begin{theorem}\label{prop:F}
The graph $F$ satisfies $\Delta(F)=3$ and $\fo(F)=4$. Consequently, $F$ is $K_{1,r}$-free for every $r\ge 4$.
\end{theorem}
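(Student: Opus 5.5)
The plan is to read off $\Delta(F)$ directly, exhibit an odd induced subgraph of order $4$, and then rule out odd induced subgraphs of order $6$ or $8$ by a parity argument combined with a short enumeration.

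\textbf{Step 1: the maximum degree.} From the listed neighbourhoods, $a,b,c,d$ each have degree $3$. On the $Y$ side, $N(u)=\{a,c,d\}$ and $N(x)=\{a,b,d\}$, so $u$ and $x$ have degree $3$. Also $N(v)=\{b,c\}$, $N(w)=\{b,d\}$ and $N(y)=\{a,c\}$, so these have degree $2$. Hence $\Delta(F)=3$. No vertex has $r\ge 4$ neighbours at all, so $F$ contains no induced (indeed no) $K_{1,r}$ for $r\ge 4$.

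\textbf{Step 2: the lower bound $\fo(F)\ge 4$.} I take $S=\{a,y,b,v\}$. The only edges inside $S$ are $ay$ and $bv$: $a\not\sim v$, $b\not\sim y$, and $a,b$ lie in the same part, as do $y,v$. So $F[S]\cong 2K_2$ is odd of order $4$.

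\textbf{Step 3: the upper bound.} By the handshake lemma an odd graph has even order. Since $|V(F)|=9$, it suffices to show there is no odd induced subgraph of order $6$ or $8$. Write an odd induced subgraph as $S=T\cup U$ with $T\subseteq X$ and $U\subseteq Y$. Two constraints must hold.
\begin{itemize}
\item Every $y'\in U$ must have an odd number (so $1$ or $3$) of neighbours in $T$. Thus $U\subseteq Y_{\rm odd}(T)$, the set of $Y$-vertices with odd degree into $T$.
\item Every vertex of $T$ must have odd degree into $U$.
\end{itemize}
Because $|T|\le 4$ and $|U|\le 5$, order at least $6$ forces $|T|\ge 1$ and $|U|\ge 2$; in fact $(|T|,|U|)\in\{(1,5),(2,4),(3,3),(4,2),(3,5),(4,4)\}$.

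For each $T\subseteq X$ with $|T|\ge 1$, I compute $Y_{\rm odd}(T)$, which is just the support of the GF$(2)$ sum of the incidence vectors of $N(t)$, $t\in T$. Then I check whether some $U\subseteq Y_{\rm odd}(T)$ of the required size gives every $t\in T$ odd degree into $U$.
\begin{itemize}
\item $|T|=1$: $Y_{\rm odd}(T)=N(t)$ has size $3<5$, so this case dies immediately.
\item $|T|=2$: two neighbourhoods of size $3$ in a $5$-set meet in at least one vertex, so the symmetric difference has size at most $4$. Size $4$ requires the two neighbourhoods to meet in exactly one vertex, and then $U$ must be the whole symmetric difference. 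In that case each $t\in T$ has exactly $2$ neighbours in $U$, which is even, a contradiction.
\item $|T|\in\{3,4\}$: there are only $4+1$ such sets. For each I list $Y_{\rm odd}(T)$ explicitly and check the few candidate $U$.
\end{itemize}

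\textbf{Main obstacle.} The only real work is this finite case check for $|T|\in\{3,4\}$. I expect the GF$(2)$ symmetric-difference formulation to keep it to a handful of lines per case. Since order $6$ is impossible, order $8$ is a fortiori covered by the same enumeration.

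\textbf{Conclusion.} Together the steps give $\fo(F)=4<9/2=|V(F)|/\chi(F)$. So $F$ is the claimed $K_{1,r}$-free counterexample for every $r\ge 4$.
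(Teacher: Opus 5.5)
Your proposal is correct and is essentially the paper's proof: split on $T=S\cap X$, restrict $S\cap Y$ to $Y_{\mathrm{odd}}(T)$, and check. Your handshake reduction to orders $6,8$ and your pigeonhole argument for $|T|=2$ only streamline the paper's Cases~1--2, and the check you defer is exactly its Cases~3--4, which does go through. Indeed, $Y_{\mathrm{odd}}(T)$ is $\{w\}$, $\{v,x,y\}$, $\{u,v,w\}$, $\{y\}$ for $T=\{a,b,c\},\{a,b,d\},\{a,c,d\},\{b,c,d\}$, and $\{u,x\}$ for $T=X$; the only candidates of the required size, $\{v,x,y\}$, $\{u,v,w\}$, $\{u,x\}$, leave $a$, $c$, $a$ respectively with even degree.

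One correction: order $8$ is not excluded ``a fortiori'' because order $6$ is, since having an odd induced subgraph of order $m$ is not monotone in $m$ (e.g.\ the cube $Q_3$ is odd of order $8$ but has no odd induced subgraph of order $6$). Order $8$ is still covered, because your list already includes $(3,5)$ and $(4,4)$, and these fail at once since $|Y_{\mathrm{odd}}(T)|\le 3$ whenever $|T|\ge 3$.
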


\begin{proof}
One can check that $\Delta(F)=3$. Thus, $F$ is $K_{1,r}$-free for every $r\ge 4$. Also, the vertex set $\{a,b,u,v\}$ induces two disjoint edges, so $\fo(F)\ge 4$. It remains to prove $\fo(F)\le 4$.

Let $S\subseteq V(F)$ be such that $F[S]$ has all degrees odd. Write $t:=|S\cap X|$. Since $F$ is bipartite, $t\in\{1,2,3,4\}$. We consider the following four cases.

\noindent\textbf{Case 1:} $t=1$. The unique selected vertex of $X$ has degree $1$ or $3$ in $F[S]$, so $|S\cap Y|\le 3$ and therefore $|S|\le 4$.

\noindent\textbf{Case 2:} $t=2$. Observe that every pair of vertices in $X$ has at
least one common neighbor in $Y$. Indeed,
\[
N(a)\cap N(b)=\{x\},\qquad N(a)\cap N(c)=\{u,y\},\qquad N(a)\cap N(d)=\{u,x\},
\]
\[
N(b)\cap N(c)=\{v\},\qquad N(b)\cap N(d)=\{w,x\},\qquad N(c)\cap N(d)=\{u\}.
\]
Any common neighbor of the two selected vertices in $X$ cannot belong to $S$, because it would have degree $2$ in $F[S]$. Hence, each selected vertex of $X$ has at most two available neighbors in $S\cap Y$. Since its degree in $F[S]$ must be positive and odd, each selected vertex of $X$ has degree exactly $1$. Thus $e(F[S])=2$. Every vertex of $S\cap Y$ also has odd degree, but its degree is at most $2$, so every vertex of $S\cap Y$ has degree $1$ as well. Therefore $|S\cap Y|=2$ and $|S|=4$.

\noindent\textbf{Case 3:} $t=3$. There are only four possibilities for $S\cap X$. A direct check yields the following four subcases.
\begin{itemize}[leftmargin=2em, itemsep=0.25em, topsep=0.25em]
\item If $S\cap X=\{a,b,c\}$, then among the vertices of $Y$ only $w$ has odd degree into $\{a,b,c\}$; selecting $w$ leaves $a$ and $c$ with degree $0$, so this case is impossible.
\item If $S\cap X=\{a,b,d\}$, then the unique choice of $S\cap Y$ that makes all three selected vertices of $X$ odd is $\{x\}$. This implies $|S|=4$.
\item If $S\cap X=\{a,c,d\}$, then the unique choice of $S\cap Y$ that makes all three selected vertices of $X$ odd is $\{u\}$. This implies $|S|=4$.
\item If $S\cap X=\{b,c,d\}$, then among the vertices of $Y$ only $y$ has odd degree into $\{b,c,d\}$; selecting $y$ leaves $b$ and $d$ with degree $0$, so this case is impossible.
\end{itemize}
Thus again $|S|\le 4$.

\noindent\textbf{Case 4:} $t=4$. If $S\cap X=X$, then a vertex of $Y$ can belong to $S$ only if it has odd degree into $X$. Now
$d_F(u)=3,$ $d_F(v)=2$, $d_F(w)=2$, $d_F(x)=3$, and $d_F(y)=2$.
So, only $u$ and $x$ are eligible. But no subset of $\{u,x\}$ makes all four vertices of $X$ odd: selecting neither gives degrees $(0,0,0,0)$ on $X$, selecting only $u$ gives $(1,0,1,1)$, selecting only $x$ gives $(1,1,0,1)$, and selecting both gives $(2,1,1,2)$. So this case is impossible.

All cases yield $|S|\le 4$, and therefore $\fo(F)=4$.
\end{proof}

Next, we present an infinite family of counterexamples. We first record the additivity of $\fo$ over disjoint unions.

\begin{lemma}\label{lem:additivity}
For any two graphs $G$ and $H$,
\[
\fo(G\cup H)=\fo(G)+\fo(H).
\]
In particular, $\fo(kG)=k\fo(G)$ for every positive integer $k$.
\end{lemma}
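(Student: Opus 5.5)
The plan is to prove the two inequalities separately, using only the fact that $G\cup H$ is a disjoint union: no edge of $G\cup H$ joins $V(G)$ to $V(H)$.

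\emph{Lower bound.} Let $S_G\subseteq V(G)$ and $S_H\subseteq V(H)$ induce odd subgraphs of orders $\fo(G)$ and $\fo(H)$. Because there are no edges between the two vertex sets, we have
\[
(G\cup H)[S_G\cup S_H]=G[S_G]\cup H[S_H].
\]
Hence every vertex of $S_G\cup S_H$ keeps exactly its degree from $G[S_G]$ or from $H[S_H]$, which is odd. This gives $\fo(G\cup H)\ge \fo(G)+\fo(H)$.

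\emph{Upper bound.} Let $S$ induce an odd subgraph of $G\cup H$, and split it as $S_G=S\cap V(G)$ and $S_H=S\cap V(H)$. For $v\in S_G$, all neighbours of $v$ in $(G\cup H)[S]$ lie in $S_G$, so
\[
\deg_{(G\cup H)[S]}(v)=\deg_{G[S_G]}(v),
\]
and this degree is odd. Thus $G[S_G]$ is an odd induced subgraph of $G$, and likewise $H[S_H]$ is one of $H$. If $S_G$ is empty, it contributes $0\le \fo(G)$, so we may either regard the empty graph as vacuously odd or simply bound the empty part by $0$. Either way, $|S|=|S_G|+|S_H|\le \fo(G)+\fo(H)$.

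\emph{The ``in particular'' claim.} The statement $\fo(kG)=k\fo(G)$ follows by induction on $k$, applying the identity to $(k-1)G$ and $G$.

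There is no real obstacle here. The only point needing care is the convention for empty sets, which the upper bound handles by bounding an empty part by $0$.
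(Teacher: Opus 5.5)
Your proposal is correct and follows the paper's own proof. For the upper bound you restrict an odd set $S$ to $S\cap V(G)$ and $S\cap V(H)$, and for the lower bound you take the union of maximum odd sets of $G$ and $H$. Your additional remarks (degrees are preserved because no edges cross, the empty part is bounded by $0$, and induction gives $\fo(kG)=k\fo(G)$) only make explicit what the paper leaves implicit.
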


\begin{proof}
If $S$ induces an odd graph in $G\cup H$, then $S\cap V(G)$ induces an odd graph in $G$ and $S\cap V(H)$ induces an odd graph in $H$. Hence $|S|\le \fo(G)+\fo(H)$. Conversely, the disjoint union of a maximum odd induced subgraph of $G$ and a maximum odd induced subgraph of $H$ is an odd induced subgraph of $G\cup H$.
\end{proof}

Since $\fo(C_4)=2$, Theorem~\ref{prop:F} and Lemma~\ref{lem:additivity} immediately give
$\fo(kF\cup \ell C_4)=4k+2\ell$, where $k\ge 1$ and $\ell\ge 0$.

\begin{theorem}\label{thm:k1rfree-counterexample}
Fix $r\ge 4$. For every integer $n\ge 33$, there exists a $K_{1,r}$-free graph $G$ of order $n$ such that
\[
\fo(G)<\frac{|V(G)|}{\chi(G)}=\frac{n}{2}.
\]
In particular, Scott's conjecture fails for $K_{1,r}$-free graphs of arbitrarily large order.
\end{theorem}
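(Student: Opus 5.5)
We take $G=kF\cup \ell C_4$ with $n=9k+4\ell$, $k\ge 1$, $\ell\ge 0$. Since $\gcd(9,4)=1$, the Frobenius number of $\{9,4\}$ is $9\cdot 4-9-4=23$, so every $n\ge 24$ can be written as $9a+4b$ with $a,b\ge 0$. We need $k\ge 1$, so we write $n-9=9a+4b$, which is possible once $n-9\ge 24$, i.e. $n\ge 33$. This explains the threshold, and we set $k=a+1\ge 1$, $\ell=b$.

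Next we check the properties of $G$. Both $F$ and $C_4$ are bipartite with edges, so $\chi(G)=2$. The graph has no isolated vertices, since every vertex of $F$ has degree at least $2$ (from the listed neighborhoods) and $C_4$ is $2$-regular. Moreover $\Delta(G)=\max\{\Delta(F),2\}=3$ by Theorem~\ref{prop:F}, so $G$ contains no induced (indeed no) $K_{1,r}$ for $r\ge 4$.

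For the odd subgraph bound, the remark following Lemma~\ref{lem:additivity} gives $\fo(G)=4k+2\ell$. Then
\[
\frac{n}{2}-\fo(G)=\frac{9k+4\ell}{2}-4k-2\ell=\frac{k}{2}>0,
\]
so $\fo(G)<n/2=|V(G)|/\chi(G)$, as required.

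The only step that needs care is the number-theoretic representation of $n$ with $k\ge 1$. Everything else follows directly from Theorem~\ref{prop:F} and Lemma~\ref{lem:additivity}. The bound $33$ is sufficient but may not be sharp; for instance, $n=32=9\cdot 0+4\cdot 8$ has no representation with $k\ge 1$, since $32-9=23$ is the Frobenius number. So $33$ is exactly the point from which the representation with $k\ge1$ is guaranteed.
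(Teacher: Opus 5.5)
Your proof is correct and follows the paper's argument. You use the same family $kF\cup \ell C_4$ and make the same checks ($\Delta\le 3$, bipartite with no isolated vertices, $\fo=4k+2\ell<(9k+4\ell)/2$). The only difference is in representing $n=9k+4\ell$ with $k\ge 1$: you apply the Frobenius number $23$ of $\{4,9\}$ to $n-9$, whereas the paper lists $(k,\ell)$ explicitly by the residue of $n$ modulo $4$. Your route has the small bonus of showing that $33$ is the exact threshold for this construction. Your phrase ``may not be sharp'' should therefore refer to the theorem's threshold rather than to the representation.
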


\begin{proof}
For integers $k\ge 1$ and $\ell\ge 0$, define
$G_{k,\ell}:=kF\cup \ell C_4.$
Both $F$ and $C_4$ have maximum degree at most $3$, so $G_{k,\ell}$ is $K_{1,r}$-free for every $r\ge 4$. Moreover, $G_{k,\ell}$ is bipartite and has no isolated vertices, hence $\chi(G_{k,\ell})=2.$
By Lemma~\ref{lem:additivity} and Theorem~\ref{prop:F},
$\fo(G_{k,\ell})=k\fo(F)+\ell\fo(C_4)=4k+2\ell.$
On the other hand,
$|V(G_{k,\ell})|=9k+4\ell.$
Therefore,
$\fo(G_{k,\ell})=4k+2\ell<\frac{9k+4\ell}{2}=\frac{|V(G_{k,\ell})|}{2}=\frac{|V(G_{k,\ell})|}{\chi(G_{k,\ell})}.$
So every $G_{k,\ell}$ is a counterexample to Scott's conjecture.

It remains to represent every integer $n\ge 33$ in the form $n=9k+4\ell$ with $k\ge 1$ and $\ell\ge 0$. This can be done explicitly according to the residue class of $n$ modulo $4$:
\[
(k,\ell)=
\begin{cases}
\left(1,\dfrac{n-9}{4}\right), & n\equiv 1\pmod 4,\\[1.2ex]
\left(2,\dfrac{n-18}{4}\right), & n\equiv 2\pmod 4,\\[1.2ex]
\left(3,\dfrac{n-27}{4}\right), & n\equiv 3\pmod 4,\\[1.2ex]
\left(4,\dfrac{n-36}{4}\right), & n\equiv 0\pmod 4.
\end{cases}
\]
Because $n\ge 33$, the displayed value of $\ell$ is always a nonnegative integer. Hence for every $n\ge 33$ there is a graph $G_{k,\ell}$ of order $n$ with $\fo(G_{k,\ell})<n/2$.
\end{proof}

\begin{remark}
The family in Theorem~\ref{thm:k1rfree-counterexample} is disconnected. It settles the ``large-order'' form of the problem for $K_{1,r}$-free graphs with $r\ge 4$, but it does not give a connected infinite family. It is natural to ask whether there is a connected construction.
\end{remark}

\section{On a problem of Wang and Wu}\label{Section:Problem-WangWu}

\begin{problem}[Wang and Wu \cite{WangWu}]\label{Prob:WangWu}
Is it true that $\fo(L(G))\ge n/2$ for every connected regular graph $G$ of order $n\ge 3$?
\end{problem}

We point out that $C_5$ is a counterexample to this problem: as $L(C_5)=C_5$, we have
$\fo(L(C_5))=\fo(C_5)=2<\frac{5}{2}.$

\begin{theorem}\label{thm:regular-c5free}
Let $G$ be a connected $k$-regular graph of order $n$, where $k\ge 2$. If $G$ is $C_5$-free, then
\[
\fo(L(G))\ge \frac{n}{2}.
\]
\end{theorem}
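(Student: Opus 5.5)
We first translate the problem into the original graph. An induced subgraph of $L(G)$ on an edge set $E'\subseteq E(G)$ is odd precisely when every edge of $E'$ is adjacent, inside $L(G)$, to an odd number of edges of $E'$. Take $E'$ to be the edge set of a spanning subgraph $H$ of $G$ whose components are paths or cycles. A path component $P_t$ of $H$ induces the line graph $P_{t-1}$. A cycle component $C_\ell$ induces $C_\ell$. However, $L(G)[E']$ may contain extra adjacencies, since two edges of different components of $H$, or two non-consecutive edges of the same cycle, can share a vertex. So we never use the whole of such an $H$. Instead we choose inside each component an induced matching of its line graph, that is, a set of pairs of consecutive edges, and we make sure that no other selected edge meets a chosen pair. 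The simplest way to guarantee this is to pass to a vertex-disjoint structure: any selected edge sharing a vertex with a selected pair would have to come from the same component. The plan is therefore to find a $2$-factor, or an $[a,b]$-factor, of $G$ and to select within it.

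\textbf{Step 1 (factor).} For even $k$, Petersen's theorem gives a $2$-factor $C_1,\dots,C_p$ of $G$. For odd $k$, we use Kano's theorem, which gives a suitable $[1,2]$-factor or a factor whose components are paths and cycles. Either way we obtain a spanning subgraph $H$ all of whose components are cycles or paths of order at least $2$.

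\textbf{Step 2 (selecting pairs).} Since distinct components of $H$ are vertex-disjoint, edges taken from different components share no vertex and are therefore nonadjacent in $L(G)$. Within a cycle $C_\ell$ we select paths of length $2$ (three vertices, two edges) separated by gaps. Two selected edges with distinct endpoint sets can share a vertex only if they share a cycle vertex, so within $C_\ell$ the only adjacencies are the intended ones, provided the selected $P_3$'s are vertex-disjoint. Each selected $P_3$ contributes two edges, each of degree $1$ in $L(G)[E']$. Choosing $\lfloor \ell/3\rfloor$ vertex-disjoint $P_3$'s gives $2\lfloor \ell/3\rfloor$ edges. This is at least $\ell/2$ for $\ell\neq 5$ by Lemma~\ref{lem:cycle}, and for $\ell=3,4$ it equals $2\ge \ell/2$.

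The remaining adjacencies must also be checked. Two selected edges $e,f$ lying in different $P_3$'s of the same cycle could still meet if they share a vertex, but edges of a cycle share vertices only when they are consecutive on the cycle, and vertex-disjointness rules this out. Chords of $G$ are not selected, so they cause no trouble. Paths $P_t$ with $t\ge 3$ are handled in the same way, giving $2\lfloor t/3\rfloor\ge t/2$ for $t\ge 3$ except $t=5$; a better count may be available for $P_5$ and $P_2$.

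\textbf{Step 3 (the $C_5$ obstruction).} Because $G$ is $C_5$-free, no $5$-cycle occurs in the $2$-factor. Summing over components gives at least $\sum \ell_i/2=n/2$ selected edges, and hence an odd induced subgraph of $L(G)$ of order at least $n/2$.

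\textbf{Main obstacle.} The main obstacle is the odd-$k$ case. Here a $2$-factor need not exist, and a $[1,2]$-factor may contain $P_2$ components (single edges), which contribute $0$ by this scheme. It may also contain $P_5$ components, which contribute $2<5/2$. We first try Kano's theorem in the form that a connected $k$-regular graph with $k$ odd has a $[k-1,k]$-factor, or a factor each of whose components is a cycle or a star $K_{1,t}$ with suitable sizes. We would then repair bad components using the regularity. A $P_2$ component $uv$ can be handled as a single edge whose neighbouring edges in $G$ are unselected; it then has degree $0$, which is not allowed. Alternatively, we merge it with an adjacent component via an edge of $G$ to form a longer path, which is possible since $k\ge 3$ and $G$ is connected. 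If this fails, we fall back on the $[1,2]$- or $\{K_{1,1},K_{1,2},C_\ell\}$-factor versions of Kano's result and do the accounting with a weight argument per component.
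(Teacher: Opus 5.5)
Your even-$k$ case is correct and is the paper's argument: Petersen's $2$-factor, no $5$-cycles by $C_5$-freeness, and $2\lfloor \ell/3\rfloor\ge \ell/2$ per cycle by Lemma~\ref{lem:cycle}. Your precaution about ``extra adjacencies'' is unnecessary, since $L(H)=L(G)[E(H)]$ for every subgraph $H$ of $G$.

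The odd-$k$ case, however, is not proved, and the route you sketch does not close. Your scheme selects vertex-disjoint $2$-edge paths. Within a factor, a path component $P_t$ therefore yields exactly $\fo(L(P_t))=\fo(P_{t-1})=2\lfloor t/3\rfloor$. So there is no ``better count'' for $P_2$, which contributes $0<1$, or for $P_5$, which contributes $2<5/2$.

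These components cannot be assumed away. A $[1,2]$-factor with regular components (Kano with $t=2$) must contain $K_2$ components whenever $G$ has no $2$-factor. Connected cubic graphs without a perfect matching, hence without a $2$-factor, do exist.

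Your proposed repair is not yet an argument:
\begin{itemize}
\item attaching a $K_2$ to a cycle component gives a tadpole, not a path;
\item opening that tadpole into a path can create a $P_5$, e.g.\ from a triangle;
\item several $K_2$'s attaching to one component give a tree;
\item the fall-back ``weight argument per component'' is never specified.
\end{itemize}
The ``$[k-1,k]$-factor'' form of Kano's theorem you quote is vacuous, since $G$ itself is such a factor. For $k=3$ in particular, your scheme needs $n/4$ vertex-disjoint $2$-edge paths in an arbitrary connected cubic graph. That is a substantial result, not a local repair.

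The missing ingredients are a bound for line graphs of cubic graphs and the right parameter in Kano's theorem. The paper handles $k=3$ directly by Corollary~3.4 of Wang and Wu \cite{WangWu}, namely $\fo(L(H))\ge |V(H)|/2$ for cubic $H$. For $k\ge 5$ it applies Kano's theorem with $t=3$, which is admissible because $3\le 2k/3$. This gives a spanning $[2,3]$-factor $H$ whose components are regular, i.e.\ cycles or cubic graphs. The cycles are not $C_5$ by hypothesis and are handled by Lemma~\ref{lem:cycle}; the cubic components are handled by the Wang--Wu bound. The identity $L(H)=L(G)[E(H)]$ then transfers $\sum_i |V(H_i)|/2=n/2$ to $L(G)$. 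Path components, and with them the $P_2$ and $P_5$ deficits, never arise.
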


\begin{proof}
We split the proof according to the parity of $k$.

\noindent\textbf{Case 1:} $k$ is even.

Let $k=2r$ with $r\ge 1$. By Petersen's $2$-factor theorem (see \cite{Mulder,Petersen}), $G$ has a spanning $2$-factor $F$. Hence $F$ is a disjoint union of cycles, $F=C_{\ell_1}\cup C_{\ell_2}\cup \cdots \cup C_{\ell_t},$
with $\ell_1+\cdots+\ell_t=n$. Since $G$ is $C_5$-free and each cycle of $F$ is a cycle in $G$, none of the $\ell_i$ equals $5$.

Now $L(F)$ is the disjoint union of the line graphs of these cycle components, so
$L(F)=L(C_{\ell_1})\cup \cdots \cup L(C_{\ell_t})\cong C_{\ell_1}\cup \cdots \cup C_{\ell_t},$
where $\ell_i\neq 5$ for $i\in [t]$.

By Lemma~\ref{lem:cycle},
$\fo(L(C_{\ell_i}))=\fo(C_{\ell_i})\ge \frac{\ell_i}{2}$, where  $1\le i\le t.$
Since $L(F)$ is a disjoint union of these components,
$\fo(L(F))=\sum_{i=1}^t \fo(L(C_{\ell_i}))\ge \frac{1}{2}\sum_{i=1}^t \ell_i=\frac{n}{2}.$
Finally, $L(F)=L(G)[E(F)]$, because adjacency of two edges depends only on whether
they share an endpoint. Therefore,
$\fo(L(G))\ge \fo(L(F))\ge \frac{n}{2}.$

\noindent\textbf{Case 2:} $k$ is odd.

If $k=3$, then the conclusion follows immediately from Corollary~3.4 of Wang and Wu \cite{WangWu}. Hence we may assume $k\ge 5$.

A theorem of Kano \cite{Kano} states that if $r$ is odd and $0<t\le 2r/3$, then every $r$-regular graph has a spanning $[t-1,t]$-factor each of whose components is regular. Taking $r=k$ and $t=3$ in Kano's theorem, we obtain a spanning $[2,3]$-factor $H$ of $G$ such that every component of $H$ is regular.

Because every component of $H$ is regular and every degree in $H$ lies in $\{2,3\}$, each component of $H$ is either a cycle or a cubic graph. Write the components as
$H=H_1\cup H_2\cup \cdots \cup H_s.$
If $H_i$ is a cycle, then it is a cycle of $G$ and hence is not a $C_5$. Therefore, Lemma~\ref{lem:cycle} already gives
$\fo(L(H_i))\ge \frac{|V(H_i)|}{2}.$
If $H_i$ is cubic, then Corollary~3.4 of Wang and Wu \cite{WangWu} yields
$\fo(L(H_i))\ge \frac{|V(H_i)|}{2}.$
Summing over all components, we have
$\fo(L(H))=\sum_{i=1}^s \fo(L(H_i))\ge \frac{1}{2}\sum_{i=1}^s |V(H_i)|=\frac{n}{2}.$
Again, $L(H)=L(G)[E(H)]$, and hence
$\fo(L(G))\ge \fo(L(H))\ge \frac{n}{2}.$
This completes the proof.
\end{proof}

The above theorem can be extended to the following by a similar method and we omit the proof.
\begin{theorem}\label{Thm:extending}
Let $G$ be a $d$-regular graph of order $n$, with $d\geq 4$. Then there exists a spanning subgraph
$H\subseteq G$ such that every component of $H$ is either a cycle or a cubic graph, and
$$\fo(L(G))\geq \frac{n}{2}-\frac{C_5(H)}{2},$$
where $C_5(H)$ is the number of components of $H$ that are isomorphic to $C_5$.
\end{theorem}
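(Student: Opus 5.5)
The plan is to mimic the proof of Theorem~\ref{thm:regular-c5free}, dropping the $C_5$-free hypothesis and tracking the loss from pentagonal components. First I would produce a spanning subgraph $H$ of $G$ whose components are all cycles or cubic graphs, splitting on the parity of $d$.

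If $d$ is even, Petersen's $2$-factor theorem gives a $2$-factor, and I take $H$ to be it, so every component is a cycle. If $d$ is odd, then $d\ge 5$ because $d\ge 4$. Kano's theorem with $r=d$ and $t=3$ applies, since $3\le 2d/3$ exactly when $d\ge 9/2$. It yields a spanning $[2,3]$-factor whose components are regular, and each such component is either a cycle or a cubic graph. The hypothesis $d\ge 4$ is what makes both constructions available without the $k=3$ special case. Connectivity of $G$ is never used, since both factor theorems hold for disconnected regular graphs: Petersen componentwise, and Kano as stated for arbitrary $r$-regular graphs.

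Next I would bound $\fo(L(H_i))$ for each component $H_i$ of $H$.
\begin{itemize}
\item If $H_i$ is a cycle $C_\ell$ with $\ell\neq 5$, then $L(H_i)\cong C_\ell$, and Lemma~\ref{lem:cycle} gives $\fo(L(H_i))\ge \ell/2$.
\item If $H_i\cong C_5$, then $\fo(L(H_i))=2=\tfrac{5}{2}-\tfrac12$.
\item If $H_i$ is cubic, Corollary~3.4 of Wang and Wu gives $\fo(L(H_i))\ge |V(H_i)|/2$. Their corollary is stated for connected cubic graphs, and each component is connected, so it applies.
\end{itemize}
In every case, $\fo(L(H_i))\ge |V(H_i)|/2-\tfrac12[H_i\cong C_5]$.

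Finally, $L(H)=L(G)[E(H)]$, because adjacency of edges depends only on shared endpoints, and $L(H)$ is the disjoint union of the $L(H_i)$. Lemma~\ref{lem:additivity} then gives
\[
\fo(L(G))\ge \fo(L(H))=\sum_i \fo(L(H_i))\ge \frac{n}{2}-\frac{C_5(H)}{2}.
\]

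The main obstacle is checking that Kano's theorem applies in the generality needed, namely the parameter range $0<t\le 2r/3$ with $t=3$, and that Wang and Wu's cubic bound covers each cubic component regardless of whether it contains a $C_5$. If the cubic bound had a hidden $C_5$-free hypothesis, I would need extra loss terms. As I read it, Corollary~3.4 is unconditional for cubic graphs, since Theorem~\ref{thm:regular-c5free} invokes it for $k=3$ while already assuming $C_5$-freeness only through the cycle components. So I expect the argument to go through verbatim.
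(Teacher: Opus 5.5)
Your proposal is correct and is exactly the ``similar method'' the paper has in mind when it omits the proof: a Petersen $2$-factor for even $d$, Kano's $[2,3]$-factor with $t=3$ for odd $d\ge 5$, and a componentwise count that loses $\tfrac12$ for each $C_5$ component. One caveat: your argument that Wang and Wu's Corollary~3.4 is unconditional does not hold, because in Theorem~\ref{thm:regular-c5free} the whole graph $G$ is $C_5$-free, so its cubic components are automatically $C_5$-free; the unconditional cubic bound is instead presupposed by the statement itself (no loss is charged to cubic components) and should be cited directly from Wang and Wu.
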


Theorem \ref{Thm:extending} has an immediate corollary. In fact, since $C_5(H)\leq \frac{n}{5}$, we have
$\fo(L(G))\geq \frac{n}{2}-\frac{1}{2}\lfloor\frac{n}{5}\rfloor\geq \frac{2n}{5}$ for $d$-regular graph $G$, where $d\geq 4$.
Consider the graph $C_5$. The above bound is a sharp answer to Problem \ref{Prob:WangWu}.

\section{Concluding remarks}
In \cite{RaoHouZeng}, Rao et al. proved that for any planar graph $G$ with girth at least 6,
$\fo(G)\geq \frac{|V(G)|}{3}$. The proof in Rao et al. \cite{RaoHouZeng} uses discharging. One may ask
what lower bound on $\fo(G)$ can be proved for planar graphs with girth at least 4 or 5. The following proposition reduces
the girth-5 planar case to the bipartite planar case.

Assume that every bipartite planar graph $H$ with no isolated vertices and $\girth(H)\ge 6$ satisfies
$\fo(H)\ge \frac{|V(H)|}{2}.$ Then we can show that every planar graph $G$ with no isolated vertices and $\girth(G)\ge 5$ satisfies
$\fo(G)\ge \frac{|V(G)|}{3}.$ Indeed, let $G$ be a planar graph with no isolated vertices and $\girth(G)\ge 5$, and let $n=|V(G)|$. Recall that Gr\"{o}tzsch's theorem \cite{Grotzsch}
states that every triangle-free planar graph is 3-colorable. Thus, $\chi(G)\leq 3$.
If $\chi(G)=2$, then $G$ is bipartite. Since $G$ is bipartite and $\girth(G)\ge 5$, we have $\girth(G)\ge 6$. By the assumed bipartite half-bound,
$\fo(G)\ge \frac{n}{2}\ge \frac{n}{3}.$
Now assume $\chi(G)=3$. Lemma \ref{lem:Scott-weighted} (applied with $(k,m)=(3,2)$) implies that $G$ has an induced bipartite subgraph $H$ with no isolated vertices and
$|V(H)|\ge \frac{2n}{3}.$
Indeed, since $H$ is an induced subgraph of a planar graph, it is planar. Also, every cycle of $H$ is a cycle of $G$, so $H$ has no $3$- or $4$-cycles; and because $H$ is bipartite, it has no odd cycle. Therefore $\girth(H)\ge 6$. Applying the assumed half-bound to $H$, we obtain
\(\fo(G)\ge \fo(H)\ge \frac{|V(H)|}{2}\ge \frac{1}{2}\cdot \frac{2n}{3}=\frac{n}{3}.\)
This proves the statement.

With the above proposition in hand, we strongly suspect the following is true:
Every bipartite planar graph $H$ with no isolated vertices and $\girth(H)\ge 6$ satisfies that
$\fo(H)\ge \frac{|V(H)|}{2}.$

On the other hand, as pointed out in \cite{AiEtAl}, Scott's conjecture for 3-chromatic graphs is still open. We strongly suspect
that the answer is positive, that is,  any 3-chromatic graph $G$ without isolated vertices satisfies that
$\fo(G)\geq \frac{|V(G)|}{3}.$ Moreover, similar as above analysis, there is a counterexample for 3-chromatic graphs
if there is a counterexample for $t$-chromatic graphs, for each $t\geq 4$.

\section*{Acknowledgements}
The author is grateful to Baoindureng Wu for sharing these problems during his visit to Xinjiang
University in August 2025.


\begin{thebibliography}{99}
\bibitem{AiEtAl}
J. Ai, Q. Guo, G. Gutin, Y. Hao, and A. Yeo, Odd induced subgraphs in graphs of maximum degree four, preprint, arXiv:2511.15489.

\bibitem{Beineke}
L.~W.~Beineke,
\newblock Characterizations of derived graphs,
\newblock {\em Journal of Combinatorial Theory} 9 (1970), 129--135.

\bibitem{BermanWangWargo}
D. M. Berman, H. Wang, and L. Wargo, Odd induced subgraphs in graphs of maximum degree three, \emph{Australasian Journal of Combinatorics} \textbf{15} (1997), 81--85.



\bibitem{Caro}
Y. Caro, On induced subgraphs with odd degrees, \emph{Discrete Mathematics} \textbf{132} (1994), 23--28.

\bibitem{CaroKrasikovRoditty}
Y. Caro, I. Krasikov, and Y. Roditty, On induced subgraphs of trees, with restricted degrees, \emph{Discrete Mathematics} \textbf{125} (1994), 101--106.



\bibitem{FerberKrivelevich}
A. Ferber and M. Krivelevich, Every graph contains a linearly sized induced subgraph with all degrees odd, \emph{Advances in Mathematics} \textbf{406} (2022), Paper No. 108534.

\bibitem{Grotzsch}
H. Gr\"{o}tzsch, Zur Theorie der diskreten Gebilde. VII. Ein Dreifarbensatz für dreikreisfreie Netze auf der Kugel. (German) Wiss. Z. Martin-Luther-Univ. Halle-Wittenberg Math.-Natur. Reihe 8 (1958/59), 109–120.

\bibitem{HouYuLiLiu}
X. Hou, L. Yu, J. Li, and B. Liu, Odd induced subgraphs in graphs with treewidth at most two, \emph{Graphs and Combinatorics} \textbf{34} (2018), 535--544.


\bibitem{Kano}
M. Kano, Factors of regular graphs, \emph{Journal of Combinatorial Theory, Series B} \textbf{41} (1986), 27--36.

\bibitem{Lovasz}
L. Lov\'asz, \emph{Combinatorial Problems and Exercises}, 2nd ed., AMS Chelsea Publishing, Providence, RI, 1993.

\bibitem{Mulder}
H.~M.~Mulder,
\newblock Julius Petersen's theory of regular graphs,
\newblock {\em Discrete Mathematics} 100 (1992), 157--175.

\bibitem{Petersen}
J.~Petersen,
\newblock Die Theorie der regul\"aren graphs,
\newblock {\em Acta Mathematica} 15 (1891), 193--220.


\bibitem{RadcliffeScott}
A. J. Radcliffe and A. D. Scott, Every tree contains a large induced subgraph with all degrees odd, \emph{Discrete Mathematics} \textbf{140} (1995), 275--279.


\bibitem{RaoHouZeng}
M. Rao, J. Hou, and Q. Zeng, Odd induced subgraphs in planar graphs with large girth, \emph{Graphs and Combinatorics} \textbf{38} (2022), Paper No. 105.



\bibitem{Scott1992}
A. D. Scott, Large induced subgraphs with all degrees odd, \emph{Combinatorics, Probability and Computing} \textbf{1} (1992), 335--349.

\bibitem{Scott2001}
A. D. Scott, On induced subgraphs with all degrees odd, \emph{Graphs Combin.} {\bf 17} (2001), 539--553.

\bibitem{WangWu}
T. Wang and B. Wu, Maximum odd induced subgraph of a graph concerning its chromatic number, \emph{Journal of Graph Theory} \textbf{107} (2024), 578--596.



\end{thebibliography}
\end{document}